\newtheorem{theo}{Theorem}
\newtheorem{lem}[theo]{Lemma}
\theoremstyle{definition}
\theoremstyle{remark}
\def\g{\overrightarrow{g}}
\def\w{\overrightarrow{w}}
\def\p{\overrightarrow{p}}
\def\C{\overrightarrow{C}}
\def\V{\mathcal{V}}
\def\A{\mathcal{A}}
\newcounter{casenum}[theo]
\newcounter{subcasenum}[theo]
\newcounter{claimnum}[theo]
\begin{document}
\thispagestyle{plain}

\begin{center} {\Large The stable index of 0-1 matrices
}
\end{center}
\pagestyle{plain}
\begin{center}
{
  {\small Zhibing Chen$^{1}$, Zejun Huang$^{1,}$\footnote{Corresponding author. \\ \indent~~ Email: chenzb@szu.edu.cn (Chen), mathzejun@gmail.com (Huang), mathyanjr@stu.ecnu.edu.cn (Yan) }, Jingru Yan$^{2}$}\\[3mm]
  {\small 1. College of Mathematics and Statistics, Shenzhen University, Shenzhen 518060, China }\\
  {\small 2. Department of Mathematics, East China Normal University, Shanghai 200241,  China }\\

}
\end{center}

\begin{center}

\begin{minipage}{140mm}
\begin{center}
{\bf Abstract}
\end{center}
{\small   We introduce the concept of stable index for 0-1 matrices. Let $A$ be a 0-1 square matrix. If $A^k$ is a 0-1 matrix for every positive integer $k$, then the stable index of $A$ is defined to be infinity; otherwise, the stable index of $A$ is defined to be the smallest positive integer $k$ such that $A^{k+1}$ is not a 0-1 matrix.
We determine the maximum finite stable index of all 0-1 matrices of order $n$ as well as the matrices attaining the maximum finite stable index.

{\bf Keywords:} exponent, primitive matrix, stable index,  0-1 matrix }

{\bf Mathematics Subject Classification:} 05C20, 05C35, 05C50, 15A99
\end{minipage}
\end{center}

\section{Introduction}
Properties on the power of nonnegative matrices attract a lot of attentions in combinatorial matrix theory. Let $A$ be a  nonnegative square matrix. The Perron-Frobenius Theorem states that its spectral radius $\rho(A)$ is an eigenvalue of $A$. If $A$ has no other eigenvalue of modulus $\rho(A)$, then it is said to be {\it primitive}.   Frobenius proved that  $A$ is primitive if and only if there is a positive integer $k$ such that $A^k$ is a positive matrix  (see \cite{XZ}, p.134). Given a primitive matrix $A$, the smallest integer $k$ such that $A^k$ is positive is called the {\it exponent} of $A$.   Wielandt \cite{HW} shows that the exponent of an $n\times n$ primitive matrix is bounded by $(n-1)^2+1$. Dulmage and Mendelsohn \cite{DM} revealed that the exponent set of $n\times n$ primitive matrices is not the set $[1,(n-1)^2+1]$, i.e., there are gaps. Lewin, Vitek, Shao and Zhang \cite{LV,JS1,KZ} determined all the possible exponents of primitive matrices of order $n$. Brualdi and Ross \cite{BR,JR}, Holladay and Varga \cite{HV}, Lewin \cite{ML}, Shao \cite{JS,JS3} investigated exponents of special primitive matrices.
Heap and Lynn \cite{HL1,HL2} studied {\it periods} and the {\it indices of convergence} of  nonnegative matrices, which are defined based on combinatorial properties of powers of nonnegative matrices and have connections with   stochastic theory.

 Denote by $M_n\{0,1\}$ the set of 0-1 matrices of order $n$.
Similar with the   exponent of primitive matrices, we introduce a new parameter on 0-1 matrices as follows, which is called the {\it stable index}.

 {\bf Definition.} {\it Let $A\in M_n\{0,1\}$. If $A^k\in M_n\{0,1\}$ for every positive integer $k$, then the stable index of $A$ is defined to be $\infty$; otherwise, the stable index of $A$ is defined to be the smallest positive integer $k$ such that $A^{k+1}\not\in M_n\{0,1\}$. We  denote the stable index of $A$ by $\theta(A)$.}

  Note that if $\theta(A)$ is finite, then  it is the largest integer $k$ such that $A, A^2,\ldots,A^k$ are all 0-1 matrices.

It is obvious that the stable index can provide an upper bound on the  spectral radius of a 0-1 matrix. Recall that the spectral radius of a 0-1 square matrix does not exceed its maximum row sum (see \cite{XZ}, p.126). Given a matrix $A\in M_n\{0,1\}$ with stable index $k$, since $A^k\in M_n\{0,1\}$, we have
$\rho^k(A)=\rho(A^k)\le n,$
which leads to $$\rho(A)\le  n^{1/k}.$$

In some sense, both the exponent of primitive matrices and the stable index of 0-1 matrices represent the density of nonzero entries in these matrices. Generally, a  matrix with large exponent or stable index can not have many nonzero entries.

Huang and Zhan \cite{HZ} studied 0-1 matrices with infinite stable index. They determined the maximum number of nonzero entries in matrices from $M_n\{0,1\}$ with infinite stable index as well as the matrices attaining this maximum number.
We solve the following problem in this paper.

{\bf Problem 1.} {\it
Given a positive integer $n$. Determine the maximum finite stable index of all 0-1 matrices of order $n$ as well as the 0-1 matrices attaining the maximum finite stable index.}

\section{Main results}
We need some terminology on digraphs.
For a digraph $D$, we denote by $\V(D)$ its vertex set,   $\A(D)$ its arc set,  $(u,v)$ or $uv$ the arc from $u$ to $v$. A sequence of
consecutive arcs $(v_1,v_2),(v_2,v_3),\ldots,(v_{t-1},v_t)$ is called a {\it directed walk} (or {\it walk}) from $v_1$ to $v_t$, which is also written as $v_1v_2\cdots v_t$. A {\it directed cycle} (or {\it cycle}) is a closed walk $v_1v_2\cdots v_tv_1$ with $v_1,v_2,\ldots,v_t$ being distinct. A {\it directed path} (or {\it path}) is a walk in which all the vertices are distinct. The {\it length} of a walk (path) is the number of arcs in the walk. A walk (path) of length $k$ is called a {\it $k$-walk ($k$-path)}. In a digraph $D$, if there is a walk from $u$ to $v$ for all  $u,v\in\V(D)$, then $D$ is said to be {\it strongly connected}.

 Two digraphs $D$ and $H$ are {\it isomorphic}, written $D\cong H$, if there are bijections $\phi: \V (D) \rightarrow \V (H)$ and $\varphi: \A(D) \rightarrow \A(H)$ such that $(u,v)\in \A(D)$ if and only if $(\phi(u),\phi(v))\in \A(H)$. In other words,  $D\cong H$ if and only if we can get $D$ by relabeling the vertices of $H$. We say a digraph $D$ contains a {\it copy} of $H$ if $D$ has a subgraph isomorphic to $H$.

 Denote by $\overrightarrow{C_k}$ the directed cycle with $k$ vertices.    Given an integer $k\ge 2$ and two disjoint   directed cycles $\overrightarrow{C_p}$ and  $\overrightarrow{C_q}$, let $\overrightarrow{g}(p,k,q)$ be the following digraph  obtained by adding a $(k-1)$-path from a vertex of $\overrightarrow{C_p}$ to a vertex of $\overrightarrow{C_q}$. Especially, when $k=2$, we call $\overrightarrow{g}(p,k,q)$ a {\it glasses digraph} and write it simply as $\overrightarrow{g}(p,q)$. It is clear that $|\mathcal{V}(\overrightarrow{g}(p,k,q))|=p+q+k-2.$

 \begin{center}
   \begin{tikzpicture}[>=stealth]
\draw[->](10.4,0)arc[start angle =180,end angle=-180,radius=0.6cm];
\draw(11,0)node[scale=1]{$\overrightarrow{C_{p}}$};
\node[shape=circle,fill=black,scale=0.12](a)at (11.6,0){x};
\node[shape=circle,fill=black,scale=0.12](b) at (12.6,0){y};
\draw[->](a)--(b);
\draw[->](13.8,0)arc [start angle =360,end angle=0,radius=0.6cm];
\draw(13.2,0)node[scale=1]{$\overrightarrow{C_{q}}$};
\draw(12.1,-1)node[scale=1]{$\overrightarrow{g}(p,q)$};

\draw[->](0,0)arc[start angle =180,end angle=-180,radius=0.6cm];
\draw(0.6,0)node[scale=1]{$\overrightarrow{C_{p}}$};
\node[shape=circle,fill=black,scale=0.12](a)at (1.2,0){m};
\draw(1.4,-0.25)node[scale=1]{$v_{1}$};
\node[shape=circle,fill=black,scale=0.12](b) at (2.2,0){n};
\draw(2.2,-0.25)node[scale=1]{$v_{2}$};
\node[shape=circle,fill=white,scale=0.12](c)at (3.2,0){};
\node[shape=circle,fill=black,scale=0.12](d) at (3.45,0){};
\node[shape=circle,fill=black,scale=0.12](e) at (3.7,0){};
\node[shape=circle,fill=black,scale=0.12](f) at (3.95,0){};
\node[shape=circle,fill=white,scale=0.12](g)at (4.2,0){};

\node[shape=circle,fill=black,scale=0.12](h)at (5.2,0){r};
\draw(5.2,-0.25)node[scale=1]{$v_{k-1}$};
\node[shape=circle,fill=black,scale=0.12](i)at (6.2,0){s};
\draw(6,-0.25)node[scale=1]{$v_{k}$};
\draw[->](a)--(b);
\draw[->](b)--(c);
\draw[->](g)--(h);
\draw[->](h)--(i);
\draw[->](7.4,0)arc [start angle =360,end angle=0,radius=0.6cm];
\draw(6.8,0)node[scale=1]{$\overrightarrow{C_{q}}$};
\draw(3.7,-1)node[scale=1]{$\overrightarrow{g}(p,k,q)$};
\end{tikzpicture}
 \end{center}

Given a digraph $D=(\mathcal{V},\mathcal{A})$ with vertex set $\mathcal{V}=\{v_1,v_2,\ldots,v_n\}$ and arc set $\mathcal{A}$, its {\it adjacency matrix} $A_D=(a_{ij})$ is defined by
\begin{equation*}
a_{ij}=\left\{\begin{array}{ll}
1,&\textrm{if } (v_i,v_j)\in \mathcal{A};\\
0,&\textrm{otherwise}.\end{array}\right.
\end{equation*}
We call $\theta(A_D)$ the {\it stable index} of $D$, denoted $\theta(D)$.

 Let $A=(a_{ij})\in M_n\{0,1\}$. We   define its digraph as $D(A)=(\mathcal{V},\mathcal{A})$  with $\mathcal{V}=\{v_1,v_2,\ldots,v_n\}$ and  $\mathcal{A}=\{(v_i,v_j): a_{ij}=1, 1\le i,j\le n\}$.   For convenience, we will always assume the vertex set of $D(A)$  to be $\{1,2,\ldots,n\}$.

Denote by $A(i,j)$ the $(i,j)$-entry of $A$. Given $A\in M_n\{0,1\}$, $A^k(i,j)=t$ if and only if $D(A)$ has exactly $t$ distinct walks of length $k$ from $i$ to $j$. It follows that $D(A)$ contains at most one walk of length $k$ from $i$ to $j$ for all $k\le \theta(A)$ and  $1\le i,j\le n$. If $\theta(A)<\infty$, then there exist vertices $i,j$ such that $D(A)$ contains two distinct walks of length $\theta(A)+1$ from $i$ to $j$.

Let $n$ be a positive integer.  Denote by
$$g(n)=\begin{cases}
           \frac{n^2-1}{4},&\text{if $n$ is odd},\\
           \frac{n^2-4}{4},&\text{if  $n\equiv0$ (mod 4)},\\
           \frac{n^2-16}{4},&\text{if  $n\equiv2$ (mod 4).}
\end{cases}$$
Our main result states as follows.

\begin{theo}\label{th1}
Let $n\geq7$ be an integer and $A\in M_n\{0,1\}$ with $\theta(A)<\infty$. Then
\begin{equation}\label{eq1}
\theta(A)\le g(n)
\end{equation}
with equality if and only if one of the following holds.
\begin{itemize}
\item[(1)] $n\ne 10$ and  $D(A)\cong\g(p,q)$ with
\begin{equation}\label{eq4}
 \{p,q\}=\begin{cases}
           \{\frac{n+1}{2},\frac{n-1}{2}\},&\text{if $n$ is odd,}\\
           \{\frac{n+2}{2},\frac{n-2}{2}\},&\text{if  $n\equiv0$ (mod 4)},\\
           \{\frac{n+4}{2},\frac{n-4}{2}\},&\text{if  $n\equiv2$ (mod 4)}.
\end{cases}
\end{equation}
  \item[(2)] $n=10$ and $D(A)\cong D$  with
  $$ D\in \{\g(3,7),\g(7,3),\g(4,3,5),\g(5,3,4)\}.$$
  \end{itemize}
\end{theo}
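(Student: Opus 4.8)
The plan is to treat the construction (lower bound) and the bound with its equality case separately, relying throughout on the monotonicity principle: if $H$ is a subdigraph of $D$, every walk of $H$ is a walk of $D$, so the shortest length carrying two distinct walks can only appear earlier in $D$; hence $\theta(D)\le\theta(H)$ whenever $\theta(H)<\infty$. I would first compute the stable index of the basic configuration. In $\g(p,k,q)$ the only way from the tail $x$ of the bridge on $\overrightarrow{C_p}$ to its head $y$ on $\overrightarrow{C_q}$ is to loop $\overrightarrow{C_p}$ some $m\ge0$ times, traverse the $(k-1)$-path, then loop $\overrightarrow{C_q}$ some $m'\ge0$ times; so the number of $L$-walks from $x$ to $y$ equals the number of representations $mp+m'q=L-(k-1)$. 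An elementary divisibility argument (two representations differ by the shift $(m,m')\mapsto(m-q/d,\,m'+p/d)$ with $d=\gcd(p,q)$, forcing the larger value to be at least $\mathrm{lcm}(p,q)$, attained by $(q/d,0)$ and $(0,p/d)$) shows the least integer with two representations is $\mathrm{lcm}(p,q)$. After checking that no other pair of vertices collides earlier, this gives $\theta(\g(p,k,q))=\mathrm{lcm}(p,q)+k-2$. Substituting the parameters of \eqref{eq4}, and $\g(4,3,5),\g(5,3,4)$ when $n=10$, yields $\theta=g(n)$, so the value $g(n)$ is attained.

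For the upper bound I would reduce an arbitrary $A$ with $\theta(A)<\infty$ to this configuration. First, if some nontrivial strongly connected component of $D(A)$ is not a single directed cycle, it has a vertex of out-degree $\ge2$, hence two distinct cycles through a common vertex; interleaving them ($C_aC_b$ versus $C_bC_a$, or the two cycles themselves when $a=b$) yields two equal-length closed walks of length at most $a+b$, and one checks this forces $\theta(A)=O(n)<g(n)$ for $n\ge7$, so such $D(A)$ are never extremal. Otherwise $D(A)$ is a ``directed acyclic arrangement of cycles'', and a minimal collision between two vertices is supported on two cycles $\overrightarrow{C_p},\overrightarrow{C_q}$ joined either in series (a copy of $\g(p,k,q)$) or in parallel; in each case the collision length is at most $\mathrm{lcm}(p,q)+t$ for some $t\ge0$ with $p+q+t\le n$. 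By monotonicity this reduces everything to the discrete optimization $\max\{\mathrm{lcm}(p,q)+t:\ p+q+t\le n,\ t\ge0\}$.

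It then remains to solve that optimization and read off the extremal digraphs. Writing the target as $n+\big(\mathrm{lcm}(p,q)-p-q\big)$ and maximizing $\mathrm{lcm}(p,q)-p-q$, the quadratic growth of $\mathrm{lcm}$ against the linear cost of the joining path forces the two cycle lengths to be coprime, as nearly equal as parity permits, with $p+q$ as large as possible: $\{\tfrac{n\pm1}{2}\}$, $\{\tfrac{n\pm2}{2}\}$, $\{\tfrac{n\pm4}{2}\}$ in the three residue classes, giving exactly $g(n)$. The borderline comparison between spending all $n$ vertices with $k=2$ and sacrificing one vertex for $k=3$ is an equality precisely when $n=10$, which produces the extra digraphs $\g(4,3,5),\g(5,3,4)$ and explains why $n\ge7$ is required (the comparison reverses for small $n$). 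For the equality characterization I would argue that attaining $g(n)$ leaves no spare vertices and no room for extra arcs—any additional arc creates a strictly earlier collision, and any unused vertex strictly lowers the optimum for $n\ge7$—so $D(A)$ must be exactly one of the listed configurations.

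I expect the main obstacle to be the structural reduction together with the equality analysis: proving cleanly that two disjoint cycles joined by a single path strictly dominate every other finite-stable-index configuration (in particular bounding the ``interleaving'' and ``parallel'' cases below $g(n)$ uniformly for $n\ge7$, including the delicate small cases $n=7,8,10$), and then showing the extremal digraph is forced to be arc-minimal so that the maximizers of the discrete problem correspond bijectively to the digraphs in \eqref{eq4} and the $n=10$ list.
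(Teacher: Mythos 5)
Your outline follows the same architecture as the paper (monotonicity of $\theta$ under subdigraphs, the value $\mathrm{lcm}(p,q)+k-2$ for $\g(p,k,q)$, reduction to components that are single cycles, a series/parallel dichotomy for the two colliding walks, and the tie $g(10)-10=g(9)-9$ producing $\g(4,3,5),\g(5,3,4)$ — this is exactly the paper's Lemma \ref{lemma5}), and your elementary representation-counting proof that $\theta(\g(p,k,q))=\mathrm{lcm}(p,q)+k-2$ is a correct and arguably cleaner alternative to the paper's circulant computation. However, two of your steps fail as stated. First, the interleaving bound: two distinct cycles of lengths $a\ne b$ through a common vertex give a collision only at length $a+b$, hence $\theta(A)\le a+b-1\le 2n-2$; at $n=7$ this is $2n-2=12=g(7)$, so you get neither the strict inequality $\theta(A)<g(n)$ nor the exclusion of such components from the equality case, precisely where you need it (the margin at $n=8$ is also just $1$). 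The paper's Lemma \ref{lemma8} repairs this by writing the two intersecting cycles as sharing a common segment $v_1\cdots v_s$ and routing the interleaved walks through it, yielding a collision of length $k+(t-s)+1\le n+1$ because the union has at most $n$ vertices, hence the sharp $\theta(A)\le n<g(n)$ for all $n\ge7$.

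Second, the parallel case does not reduce to your optimization $\max\{\mathrm{lcm}(p,q)+t:\ p+q+t\le n\}$: when the walks $\w_1,\w_2$ from $x$ to $y$ pass through disjoint cycles $\C_p,\C_q$, the connecting paths of lengths $r,s$ reuse cycle vertices, and $p+q+\max\{r,s\}$ can exceed $n$ (with $n=10$ and two disjoint $4$-cycles one can have $r=5$, giving $13>10$), so no constraint of the form $p+q+t\le n$ is available. The paper instead exploits minimality of the collision length: taking $u,v$ minimal with $r+up=s+vq$, if this common value exceeded $g(n)$ then, since $r,s\le n-3$ and $g(n)+1-g(n-2)\ge n-3$, one could subtract $\mathrm{lcm}(p,q)$ from both sides keeping the loop counts nonnegative, contradicting minimality; this yields the strict bound $\theta(A)<g(n)$ in the parallel case. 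Relatedly, your equality analysis (``any additional arc creates a strictly earlier collision'') is asserted rather than proved; it is exactly here that the paper's identity $\sum_{k=0}^{pq-1}C_p^kXC_q^{pq-1-k}=J_{p\times q}$ for a single nonzero entry (Lemma \ref{lemma3}, used in Lemma \ref{lemma4} to show a bridge with two nonzero entries already fails at power $pq$), together with the block computation in Lemma \ref{lemma9} ruling out $\alpha\ne0$, $B_{13}$, $B_{31}$ in the $n=10$ case, does real work that your counting argument for the exact digraph $\g(p,k,q)$ does not cover.
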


Let $E_{ij}$ be the 0-1 matrix with exactly one nonzero entry lying on the  $(i,j)$ position, while its size will be clear from the context. Then $C_n=\sum_{i=1}^{n-1}E_{i,i+1}+E_{n1}$ is the basic circulant matrix of order $n$.

  It is clear that $D(A)\cong\g(p,q)$ if and only if $A$ is permutation similar to
$$\left(\begin{array}{cc}
C_p&E_{ij}\\
0&C_q
\end{array}\right),$$ where $E_{ij}$ is an arbitrary $p\times q$ 0-1 matrix with exactly one nonzero entry. $D(A)\cong\g(p,3,q)$ if and only if $A$ is permutation similar to $$\left(\begin{array}{ccc}
                        C_p&u&0\\
                        0&0&v^T\\
                        0&0&C_q
                        \end{array}\right),$$
where $u,v$ are 0-1 column vectors with exactly one nonzero entry.

{\it Remark.}~~~~Denote by
$s(n)$ the maximum finite stable index of all matrices in $M_n\{0,1\},$ i.e.,$$s(n)=max\{\theta(A):A\in M_n\{0,1\}, \theta(A)<\infty\}.$$ Theorem \ref{th1} shows  $$s(n)=g(n)\quad\text{for}\quad n\ge 7.$$ When $n\le 6$, $s(n)=g(n)$ may not hold. In fact, by using MATLAB we can get all $s(n)$ for $n\le 6$ as follows.
\begin{center}\begin{tabular}{|c|l|l|l|l|l|l|}
  \hline
  $n$ & 2 & 3 & 4 & 5 & 6 \\
  \hline
 $ s(n)$ & 1 & 3 & 4 & 6 & 7 \\
  \hline
\end{tabular} \end{center}

\section{Proof of Theorem \ref{th1}}

In this section we present the proof of Theorem \ref{th1}.
We write $B\le A$ or $A\ge B$ to mean that $A-B$ is a nonnegative matrix. We need the following lemmas.

\begin{lem}
\label{lemma1} Let  $A, B$ be square 0-1 matrices. If $B$ is a principal submatrix of $A$ or $B\le A$, then $$\theta(B)\geq\theta(A).$$
\end{lem}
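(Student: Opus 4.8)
The plan is to reduce both hypotheses to a single fact: whenever $0\le B\le A$ entrywise, the powers are monotone, $B^k\le A^k$ for every $k\ge 1$. First I would record the combinatorial reading of the stable index already noted in the excerpt: for a 0-1 matrix $A$ one has $\theta(A)\ge k$ precisely when $A,A^2,\ldots,A^k$ are all 0-1 matrices, and $\theta(A)=\infty$ when every power is 0-1. Since every power of a 0-1 matrix is a nonnegative integer matrix, proving $\theta(B)\ge\theta(A)$ amounts to showing that $B^j$ is 0-1 for every $j$ in the range where $A^j$ is known to be 0-1.

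For the entrywise case $B\le A$, I would prove $B^k\le A^k$ by induction: since $B,A\ge 0$ and $B\le A$, multiplication by a fixed nonnegative matrix preserves the entrywise order, so $B^{k+1}=B\cdot B^k\le A\cdot A^k=A^{k+1}$. Now fix any $j\le\theta(A)$ (and any $j$ at all if $\theta(A)=\infty$). Then $A^j$ is 0-1, so each entry of $B^j$ is a nonnegative integer bounded above by the corresponding entry of $A^j$, hence lies in $\{0,1\}$; thus $B^j$ is 0-1. As this holds for all such $j$, we get $\theta(B)\ge\theta(A)$.

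For the principal-submatrix case, I would reduce to the previous one by zero-padding. After a simultaneous permutation of rows and columns I may assume $B=A[S]$ occupies the leading block, and I set $\tilde B=\left(\begin{smallmatrix}B&0\\0&0\end{smallmatrix}\right)$, the matrix obtained from $A$ by zeroing all rows and columns outside $S$. Then $\tilde B\le A$ entrywise, because this operation only replaces some nonnegative entries of $A$ by $0$. Moreover $\tilde B^j=\left(\begin{smallmatrix}B^j&0\\0&0\end{smallmatrix}\right)$, so $\tilde B^j$ is 0-1 if and only if $B^j$ is, whence $\theta(\tilde B)=\theta(B)$. Applying the entrywise case to $\tilde B\le A$ yields $\theta(B)=\theta(\tilde B)\ge\theta(A)$. (Equivalently, one can argue directly that a walk of $D(B)$ is precisely a walk of $D(A)$ confined to $S$, so $B^j(i,\ell)\le A^j(i,\ell)$ for $i,\ell\in S$.)

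There is no serious obstacle here; the lemma is elementary. The only points needing care are the final inference in each case — that entrywise domination of a nonnegative integer matrix by a 0-1 matrix forces the dominated matrix to be 0-1 — and the bookkeeping that lets the case $\theta(A)=\infty$ be handled uniformly with the finite case.
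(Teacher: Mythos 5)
Your proposal is correct: the paper's entire proof of this lemma is ``It is obvious,'' and your argument supplies exactly the standard details being elided --- monotonicity of powers under the entrywise order for nonnegative matrices, plus reduction of the principal-submatrix case to the entrywise case by zero-padding (or the equivalent walk-counting observation that $B^j(i,\ell)\le A^j(i,\ell)$ for indices in $S$). Both steps, including the uniform handling of $\theta(A)=\infty$, are sound, so there is nothing to correct.
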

\begin{proof}
It is obvious.
\end{proof}

\begin{lem}
\label{lemma8}
Let $A\in M_n\{0,1\}$ be irreducible with $n\ge 2$. If $D(A)\cong \C_n$, then   $\theta(A)=
\infty$; otherwise, we have
$\theta(A)\le n$.

\end{lem}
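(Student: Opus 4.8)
The plan is to separate the two cases and, throughout, to translate the statement into the walk‑counting criterion recalled just before the lemma: since $A^{m}(i,j)$ counts the distinct walks of length $m$ from $i$ to $j$ in $D:=D(A)$, the inequality $\theta(A)\le n$ is equivalent to exhibiting an integer $m\le n+1$ and vertices $i,j$ admitting \emph{two} distinct walks of length $m$ from $i$ to $j$. For the first case, if $D\cong\C_n$ then $A$ is the adjacency matrix of a single $n$‑cycle, i.e.\ a cyclic permutation matrix; every power $A^{m}$ is then again a permutation matrix and hence lies in $M_n\{0,1\}$, so $\theta(A)=\infty$.

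For the second case ($D$ strongly connected, $D\not\cong\C_n$) I would first extract a theta‑type subconfiguration. Since $D$ is strongly connected it admits an ear decomposition $D=C_0\cup E_1\cup\cdots\cup E_k$, where $C_0$ is a directed cycle and each $E_i$ a directed ear. Because $D$ is strongly connected on all $n$ vertices, if $C_0$ were all of $D$ it would be a Hamiltonian directed cycle and so $D\cong\C_n$, which is excluded; hence $k\ge1$ and we may take the first ear $E_1$, a directed path whose two endpoints $x,y$ lie on $C_0$ and whose interior is disjoint from $C_0$. Writing $Q_1$ for the arc of $C_0$ running from $x$ to $y$, $R$ for the complementary arc of $C_0$ running from $y$ to $x$, and $Q_2:=E_1$, I obtain three \emph{pairwise internally disjoint} directed paths $Q_1,Q_2\colon x\to y$ and $R\colon y\to x$. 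Moreover $Q_1$ and $Q_2$ leave $x$ through different arcs: $Q_1$ starts along $C_0$, while the first arc of $E_1$ is not an arc of $C_0$ (recall $A$ is $0$–$1$, so there are no parallel arcs). The degenerate closed ear $x=y$ is handled the same way with $R$ empty, $Q_1=C_0$ and $Q_2=E_1$ being two cycles through $x$.

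I would then form the two walks from $x$ to $y$
$$W_A=Q_1RQ_2,\qquad W_B=Q_2RQ_1,$$
each traversing $x\to y\to x\to y$ and hence of common length $L=q_1+q_2+r$, where $q_1,q_2,r$ are the lengths of $Q_1,Q_2,R$. They are distinct because their initial arcs out of $x$ differ. Crucially, internal disjointness of the three paths means the vertices they use are pairwise distinct and number $q_1+q_2+r-1\le n$, so $L=q_1+q_2+r\le n+1$. Thus $A^{L}(x,y)\ge2$ with $L\le n+1$, and the walk‑counting criterion gives $\theta(A)\le L-1\le n$, as required.

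The main obstacle is precisely the length bound $L\le n+1$: two cycles through a common vertex are easy to produce, but they may overlap badly and their naive concatenations can have length up to about $2n$. Routing the two walks as $Q_1RQ_2$ and $Q_2RQ_1$ through a genuinely internally‑disjoint theta — supplied by the first ear of an ear decomposition — is exactly what keeps the vertex count, and hence $L$, at most $n+1$. The only point needing a separate (but routine) check is the closed‑ear case $x=y$, where $R$ is empty and the two walks reduce to $C_0E_1$ and $E_1C_0$ of common length $q_1+q_2\le n+1$.
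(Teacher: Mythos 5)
Your proof is correct and takes essentially the same route as the paper: the paper's two walks $v_sv_{s+1}\cdots v_kv_1\cdots v_su_{s+1}\cdots u_tv_1$ and $v_su_{s+1}\cdots u_tv_1\cdots v_sv_{s+1}\cdots v_kv_1$ are exactly your $Q_1RQ_2$ and $Q_2RQ_1$ through a theta configuration, and its length bound $k+(t-s)+1\le n+1$ is the same vertex count you use. The only difference is that where the paper simply asserts that $D(A)\not\cong\C_n$ yields two cycles meeting in a single common subpath, you derive that configuration from the first ear of an ear decomposition (your closed-ear case being the paper's $s=1$), which is a somewhat more careful justification of a step the paper takes for granted.
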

\begin{proof}
Since $A$ is irreducible,  $D(A)$ is strongly connected. If $D(A)\cong\C_n$, then we have $\theta(A)=\infty$. Otherwise, $D(A)$ has two directed cycles $\C_k$ and $\C_t$ with nonempty intersection, which can be written as $\C_k=v_1v_2\cdots v_sv_{s+1}\cdots v_kv_1$ and $\C_t=v_1v_2\cdots v_su_{s+1}\cdots u_tv_1$ with $k\ge t\ge s$ and $$\{v_{s+1},\ldots,v_k\}\cap\{u_{s+1},\ldots,u_t\}=\emptyset.$$
It follows that $D(A)$ has the following distinct walks of length $k+(t-s)+1$ from $v_s$ to $v_1$:
$$\begin{cases}
v_s v_{s+1}\cdots v_k
v_1 \cdots v_s u_{s+1}  \cdots  u_t  v_1,\\
v_s u_{s+1}  \cdots  u_t
v_1 \cdots v_sv_{s+1}\cdots v_k  v_1.
\end{cases}$$
Hence, we have
$\theta(A)\le k+(t-s)\le n.$
\end{proof}
 Denote by $J_{m\times n}$  the $m\times n$ matrix with all entries equal to 1. We have
\begin{lem}
\label{lemma3}Let $m$ and $n$ be relatively prime positive integers. Then for any $1\le i\le m$ and $1\le j\le n$, we have
\begin{equation*}
\sum_{k=0}^{mn-1}{C_m}^kE_{ij}{C_n}^{mn-k-1}=J_{m\times n}.
\end{equation*}
\end{lem}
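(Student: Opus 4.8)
The plan is to show that every one of the $mn$ summands is itself a single-entry $0$-$1$ matrix $E_{a,b}$, and that as $k$ runs through $\{0,1,\dots,mn-1\}$ the position $(a,b)$ sweeps out each entry of an $m\times n$ array exactly once; summing then yields $J_{m\times n}$. The coprimality of $m$ and $n$ will enter precisely at the step where I claim the sweep is a bijection.

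First I would record how the two circulants act on the position of a single nonzero entry. With the convention $(C_m)_{a,b}=1\iff b\equiv a+1\ (\mathrm{mod}\ m)$, one has $(C_m^{\,k})_{a,b}=1\iff b\equiv a+k\ (\mathrm{mod}\ m)$, and likewise for $C_n$. Left-multiplication by $C_m^{\,k}$ therefore shifts the row index of a single-entry matrix backward by $k$, while right-multiplication by $C_n^{\,s}$ shifts the column index forward by $s$. Applying this to $E_{ij}$ with $s=mn-k-1$ gives
\begin{equation*}
C_m^{\,k}\,E_{ij}\,C_n^{\,mn-k-1}=E_{a(k),\,b(k)},\qquad a(k)\equiv i-k\ (\mathrm{mod}\ m),\quad b(k)\equiv j-1-k\ (\mathrm{mod}\ n),
\end{equation*}
where the representatives are taken in $\{1,\dots,m\}$ and $\{1,\dots,n\}$. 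Here I would note $mn-k-1\equiv -k-1\ (\mathrm{mod}\ n)$, which is what produces the clean form of $b(k)$.

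The key step is then a Chinese Remainder Theorem argument. Since $\gcd(m,n)=1$, the map $k\mapsto(k\bmod m,\ k\bmod n)$ is a bijection from $\{0,1,\dots,mn-1\}$ onto $\mathbb{Z}_m\times\mathbb{Z}_n$. Moreover $a(k)$ depends only on $k\bmod m$ through the affine bijection $\bar k\mapsto i-\bar k$ of $\mathbb{Z}_m$, and $b(k)$ depends only on $k\bmod n$ through the affine bijection $\bar k\mapsto j-1-\bar k$ of $\mathbb{Z}_n$. Composing, $k\mapsto(a(k),b(k))$ is a bijection from $\{0,\dots,mn-1\}$ onto $\{1,\dots,m\}\times\{1,\dots,n\}$. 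Hence the $mn$ single-entry matrices $E_{a(k),b(k)}$ are pairwise distinct and exhaust all positions, so their sum is $J_{m\times n}$.

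The only genuine obstacle is index bookkeeping rather than any deep idea: getting the shift directions right under left versus right multiplication, correctly reducing the exponent $mn-k-1$ modulo $n$, and being explicit that it is exactly the hypothesis $\gcd(m,n)=1$ that upgrades the pair of coordinate-wise surjections to a single bijection onto the full grid (for $\gcd(m,n)>1$ the image would collapse onto a proper sublattice and the summands would overlap). Everything else is a routine verification.
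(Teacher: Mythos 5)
Your proof is correct and takes essentially the same approach as the paper's: both reduce the lemma to showing that the $mn$ single-entry summands occupy pairwise distinct positions in the $m\times n$ grid, with coprimality entering exactly where divisibility by both $m$ and $n$ forces divisibility by $mn$. The only difference is presentational—you make the position map $k\mapsto(a(k),b(k))$ explicit and invoke the Chinese Remainder Theorem directly, whereas the paper derives injectivity from the identity $C_m^{\,k-t}E_{ij}=E_{ij}C_n^{\,k-t}$ and concludes by counting.
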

\begin{proof}
Denote by $P=C_m$, $Q=C_n$.
Note that  ${P}^{k}E_{ij}{Q}^{mn-k-1}$ has exactly one nonzero entry. It suffices to prove
\begin{equation}\label{eq2}
P^{k}E_{ij}Q^{mn-k-1}\neq P^{t}E_{ij}Q^{mn-t-1} \text{\quad for\quad } k,t\in \{0,1,\ldots,mn-1\}~~ \text{such that}~~ k\neq t.
\end{equation}
  Suppose  $P^{k}E_{ij}Q^{mn-k-1}=P^{t}E_{ij}Q^{mn-t-1}$. Then we have
$$P^{k-t}E_{ij}=E_{ij}Q^{k-t},$$
 which implies  $k-t=um=vn$ for some integers $u,v$. Since $m$ and $n$ are relatively prime, we obtain $k-t=rmn$ for some integer $r$. Now $k-t\in[0,mn-1]$ leads to  $k-t=0$. Therefore, we have (\ref{eq2}).
\end{proof}

  Given two positive integers $p$ and $q$, we write $(p,q)=1$ if they are relatively prime. Denote by $lcm(p,q)$ the least common multiple of   $p$ and $q$. Since we can find two distinct walks of length $lcm(p,q)+k-1$ with the same initial and terminal vertices in $\g(p,k,q)$, we have
  \begin{equation}\label{eq3}
  \theta(\overrightarrow{g}(p,k,q))\le lcm(p,q)+k-2.
  \end{equation}

 Note that
\begin{equation}\label{eq6}
\begin{cases}
           (\frac{n+1}{2},\frac{n-1}{2})=1,&\text{if $n$ is odd,}\\
           (\frac{n+2}{2},\frac{n-2}{2})=1,&\text{if  $n\equiv0$ (mod 4)},\\
           (\frac{n+4}{2},\frac{n-4}{2})=1,&\text{if  $n\equiv2$ (mod 4)}.
\end{cases}
\end{equation}
 We have $$g(n)=\max\{pq: p+q=n \text{ and } p,q \text{ are relatively prime}\}.$$
\begin{lem}
\label{lemma4}Let $n\geq7$ be an integer. Then for any matrix $B\in M_n\{0,1\}$ with the form  $$B=\left(\begin{array}{cc}C_p&X\\0&C_q\end{array}\right),\quad X\ne 0,$$   we have
\[\theta(B)\leq g(n).\]
Equality holds if and only if $D(B)\cong\overrightarrow{g}(p,q)$ with $p,q$ satisfying (\ref{eq4}).
\end{lem}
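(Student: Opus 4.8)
The plan is to reduce the whole question to the single off-diagonal block. Since $C_p$ and $C_q$ are permutation matrices, every power $C_p^{\,k}$ and $C_q^{\,k}$ is again a $0$-$1$ matrix, so when we expand
\[
B^k=\begin{pmatrix} C_p^{\,k} & Y_k\\[2pt] 0 & C_q^{\,k}\end{pmatrix},\qquad Y_k=\sum_{i=0}^{k-1}C_p^{\,i}\,X\,C_q^{\,k-1-i},
\]
the matrix $B^k$ is $0$-$1$ if and only if $Y_k$ is $0$-$1$. Hence $\theta(B)$ equals the largest $k$ for which $Y_1,\dots,Y_k$ are all $0$-$1$, and the entire analysis takes place in the $p\times q$ rectangle. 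I would record at the outset that each summand $C_p^{\,i}XC_q^{\,k-1-i}$ has the same number of ones as $X$, because multiplication by permutation matrices preserves the entry sum; so if $X$ has exactly $m$ nonzero entries then the sum of all entries of $Y_k$ is $mk$.

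Next I would split according to $m$. If $m\ge 2$, a pigeonhole argument finishes immediately: $Y_k$ lives in a $p\times q$ array, so it can be $0$-$1$ only if its entry sum $mk$ is at most $pq$; once $mk>pq$ some position is covered twice and $Y_k$ fails to be $0$-$1$. Therefore $\theta(B)\le \lfloor pq/m\rfloor\le pq/2\le n^2/8$, and a direct comparison shows $n^2/8<g(n)$ for all $n\ge 7$. Thus more than one crossing arc forces $\theta(B)<g(n)$ strictly, so no equality can come from this case and any extremal matrix must satisfy $D(B)\cong\g(p,q)$.

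It remains to treat a single crossing arc, $X=E_{ab}$, and to maximize over the split $p+q=n$. Here every summand of $Y_k$ is a single entry, and by the congruence argument underlying Lemma~\ref{lemma3} two summands indexed by $i$ and $i'$ coincide exactly when $i\equiv i'\pmod p$ and $i\equiv i'\pmod q$, i.e.\ when $i\equiv i'\pmod{\mathrm{lcm}(p,q)}$; hence the entries of $Y_k$ are pairwise distinct precisely for $k\le\mathrm{lcm}(p,q)$, which gives $\theta(\g(p,q))=\mathrm{lcm}(p,q)$, attaining the bound~\eqref{eq3}. The proof then closes with the arithmetic optimization of $\mathrm{lcm}(p,q)$: writing $d=(p,q)$ one has $\mathrm{lcm}(p,q)=pq/d\le (p+q)^2/(4d)=n^2/(4d)$, so $d\ge 2$ again yields $\mathrm{lcm}(p,q)\le n^2/8<g(n)$, while for $d=1$ we have $\mathrm{lcm}(p,q)=pq\le g(n)$ by the description of $g$ recalled before the lemma. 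Equality therefore forces $(p,q)=1$ together with the closest coprime split of $n$, which is exactly the set $\{p,q\}$ in~\eqref{eq4}. I expect the main obstacle to be this final step: one must check that the extremal coprime split is unique and given precisely by~\eqref{eq4} in each residue class of $n$ modulo $4$ (treating the parity of $n/2$ separately when $n$ is even), and confirm that the hypothesis $n\ge 7$ is exactly what makes the "$d\ge2$'' and "$m\ge2$'' estimates land strictly below $g(n)$, in line with the fact that $s(n)=g(n)$ fails for the smaller values tabulated for $s(n)$.
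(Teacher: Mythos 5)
Your proof is correct, and while it shares the paper's skeleton (the block formula for $B^k$, reduction to the off-diagonal block $Y_k$, and a case split on the number $m$ of nonzero entries of $X$), you diverge from the paper in two worthwhile ways. For $m\ge 2$ the paper invokes Lemma~\ref{lemma3} to produce $\sum_{k=0}^{pq-1}C_p^{\,k}XC_q^{\,pq-1-k}\ge 2J_{p\times q}$, which requires coprimality of $p,q$ (deduced beforehand from the equality assumption); your entry-counting pigeonhole argument --- each summand has exactly $m$ ones, so $Y_k$ being $0$-$1$ forces $mk\le pq$, hence $\theta(B)\le\lfloor pq/m\rfloor\le n^2/8<g(n)$ --- needs no coprimality, gives a stronger quantitative bound, and dispatches this case before any arithmetic on the split. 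For $m=1$ you push the congruence argument of Lemma~\ref{lemma3} slightly further to get the exact value $\theta(\g(p,q))=\mathrm{lcm}(p,q)$ for \emph{all} $p,q$ (summands indexed by $i,i'$ coincide iff $i\equiv i'\pmod{\mathrm{lcm}(p,q)}$), whereas the paper only records the inequality \eqref{eq3} and then verifies equality in the coprime extremal case via ``Lemma~\ref{lemma3} and its proof''; your exact formula makes both directions of the equality characterization fall out at once. The final step you flag as the main obstacle is in fact routine and is exactly what the paper assumes as established context: the identity $g(n)=\max\{pq:p+q=n,\ (p,q)=1\}$ together with \eqref{eq6} is stated just before the lemma, and uniqueness of the extremal split is automatic since, with $p+q=n$ fixed, the product $pq$ determines $\{p,q\}$, so $pq=g(n)$ forces the pair in \eqref{eq4}. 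With that observation inserted, your argument is complete and, in the $m\ge2$ case, cleaner than the paper's.
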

\begin{proof}
Note that  $D(B)$ contains a copy of $\g(p,q)$.
By (\ref{eq3}) and Lemma \ref{lemma1} we have
\begin{equation}\label{eq5}
\theta(B)\leq \theta(\g(p,q))\le lcm(p,q)\leq g(n).
\end{equation}
Now suppose $\theta(B)=g(n)$. Then (\ref{eq5}) implies that  $p,q$ satisfy (\ref{eq4}) and they are relatively prime.
By direct computation we have
$$B^{m}=\left(\begin{matrix}{C_p}^{m}&
\sum_{k=0}^{m-1}{C_p}^kX{C_q}^{m-1-k}\\0&{C_q}^{m}\\
\end{matrix}\right).$$
Next we distinguish two cases.

{\it Case 1.} $X$ has at least two nonzero entries.  Applying Lemma \ref{lemma3} we have
$$\sum_{k=0}^{pq-1}{C_p}^kX{C_q}^{pq-1-k}\ge 2J_{p\times q},$$
which leads to $B^{pq}\not\in M_n\{0,1\}$ and $$\theta(B)<pq=g(n).$$

{\it Case 2.} $X$ has exactly one nonzero entry.  By Lemma \ref{lemma3} and its proof,  we have $$\sum_{k=0}^{m-1}{C_p}^kX{C_q}^{m-1-k}\le J_{p\times q}\quad\text{for}\quad m\le pq, $$
 where equality holds if and only if $m=pq$. On the other hand, it is easy to check that $B^{pq+1}\notin M_n\{0,1\}$. Therefore, we have $$\theta(B)=pq=g(n).$$
Thus we get $D(B)\cong\g(p,q)$ and the second part of the lemma holds.
\end{proof}

\begin{lem}
\label{lemma5} Let $\phi(t)=g(t)-t$. Then
$$\phi(r)\ge\phi(s) \text{ for all positive integers } r\geq7 \text{ and } s<r.$$   Equality holds if and only if $(r,s)=(10,9)$.
\end{lem}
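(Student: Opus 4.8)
The plan is to reduce the whole statement to an analysis of the consecutive increments of $\phi$. Writing $\phi(t)=\frac{t^{2}-4t-c(t)}{4}$, where $c(t)=1$ if $t$ is odd, $c(t)=4$ if $t\equiv 0\pmod 4$, and $c(t)=16$ if $t\equiv 2\pmod 4$, a direct expansion gives $4\bigl(\phi(t+1)-\phi(t)\bigr)=2t-3-\bigl(c(t+1)-c(t)\bigr)$. First I would record the four possible values of the correction jump $c(t+1)-c(t)$ according to the residue of $t$ modulo $4$: the jump equals $15$ exactly when $t\equiv 1\pmod 4$, equals $3$ when $t\equiv 3\pmod 4$, and is negative (namely $-3$ or $-15$) when $t$ is even. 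In particular the largest possible jump is $15$, attained only for $t\equiv 1\pmod 4$, so that $4\bigl(\phi(t+1)-\phi(t)\bigr)\ge 2t-18$, with equality in this lower bound precisely in the class $t\equiv 1\pmod 4$.

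Next I would use this to show that $\phi$ is nondecreasing on the integers $t\ge 7$, strictly increasing except at the single step from $9$ to $10$. Indeed, if $t\not\equiv 1\pmod 4$ then $c(t+1)-c(t)\le 3$, whence $\phi(t+1)-\phi(t)\ge\frac{2t-6}{4}>0$ for every $t\ge 7$; while if $t\equiv 1\pmod 4$ and $t\ge 7$, then necessarily $t\ge 9$ and $4\bigl(\phi(t+1)-\phi(t)\bigr)=2t-18\ge 0$, with equality if and only if $t=9$. Telescoping $\phi(r)-\phi(s)=\sum_{t=s}^{r-1}\bigl(\phi(t+1)-\phi(t)\bigr)$ over any pair $7\le s<r$ then yields $\phi(r)\ge\phi(s)$, and the sum vanishes if and only if every summand vanishes; since for $t\ge 7$ the only vanishing summand occurs at $t=9$, this forces the range of summation to consist of the single index $t=9$, i.e. $(r,s)=(10,9)$.

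It remains to settle the range $s\le 6$, which I would dispatch by comparing against the threshold $4\phi(7)=49-28-1=20$. A direct check of $4\phi(s)=s^{2}-4s-c(s)$ for $1\le s\le 6$ shows its maximum is $4\phi(5)=4$, so $\phi(s)\le\phi(5)<\phi(7)\le\phi(r)$ for every $r\ge 7$, strictly. Combining this with the telescoping estimate over $7\le s<r$ gives $\phi(r)\ge\phi(s)$ for all integers $r\ge 7$ and $s<r$, with equality exactly at $(r,s)=(10,9)$.

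The only genuinely delicate point is the equality analysis: one must verify that among all $t\ge 7$ the increment $\phi(t+1)-\phi(t)$ can vanish at most once, and that this happens precisely at $t=9$. This is where the interplay between the linear term $2t-3$ and the maximal correction jump $c(t+1)-c(t)=15$—available only in the class $t\equiv 1\pmod 4$—is essential: the smallest admissible such $t$ is $9$, at which $2t-3=15$ exactly cancels the jump, producing the unique equality case $(r,s)=(10,9)$, and for every larger $t\equiv 1\pmod 4$ the increment is already strictly positive.
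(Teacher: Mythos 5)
Your proof is correct, but it takes a genuinely different route from the paper's. You decompose $\phi(r)-\phi(s)$ into consecutive increments: writing $\phi(t)=\frac{t^2-4t-c(t)}{4}$ and computing $4\bigl(\phi(t+1)-\phi(t)\bigr)=2t-3-\bigl(c(t+1)-c(t)\bigr)$, you classify the jump $c(t+1)-c(t)\in\{-15,-3,3,15\}$ by the residue of $t$ modulo $4$ (all four values check out: $-3,15,-15,3$ for $t\equiv 0,1,2,3$ respectively), conclude that $\phi$ is nondecreasing on $t\ge 7$ with a unique flat step at $t=9$ where $2t-18$ vanishes, and telescope to localize the equality case to $(r,s)=(10,9)$; a comparison of the six values $\phi(1),\ldots,\phi(6)$ (maximum $\phi(5)=1$) against $\phi(7)=5$ disposes of $s\le 6$. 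The paper instead proves the single global estimate $\phi(r)-\phi(s)\ge\frac{(r+s-4)(r-s)}{4}-\frac{15}{4}>0$, valid for all $r\ge 11$ and $s<r$, by bounding $g(r)$ below via the worst branch $\frac{r^2-16}{4}$ and $g(s)$ above via the best branch $\frac{s^2-1}{4}$, and then tabulates all ten values $\phi(1),\ldots,\phi(10)$ to settle $7\le r\le 10$, the equality case included. What your approach buys is an explanation of \emph{why} equality occurs exactly once: the maximal correction jump $15$ is available only in the class $t\equiv 1\pmod 4$, and the linear term $2t-3$ first matches it at $t=9$; the paper's argument is shorter but treats the window $7\le r\le 10$ as a black-box numerical verification. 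Your numerical claims (the jump table, $4\phi(7)=20$, and $4\phi(s)\le 4$ for $s\le 6$) all agree with the paper's table, so the argument is complete.
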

\begin{proof}
For $r\ge 11$ and $s<r$, we have
\begin{eqnarray*}
\phi(r)-\phi(s)\ge \frac{r^2-16}{4}-r-\frac{s^2-1}{4}+s
=\frac{(r+s-4)(r-s)}{4}-\frac{15}{4}>0.
\end{eqnarray*}
Note that
\begin{eqnarray*}
\begin{array}{lllll}
\phi(1)=-1, &\phi(2)=-5, &\phi(3)=-1,& \phi(4)=-1,& \phi(5)=1,\\
\phi(6)=-1, &\phi(7)=5,& \phi(8)=7, &\phi(9)=11, &\phi(10)=11.
\end{array}
\end{eqnarray*}
The conclusion follows clearly.
\end{proof}
\begin{lem}
\label{lemma9}
Let $n\geq7$ be an integer and
$$\mathcal{B}=\big\{B\in M_n\{0,1\}: D(B) \text{ contains a copy of } \g(p,k,q) \text{   for some positive integers }p,q,k\big\}.$$
Then $$\theta(B)\leq g(n)\quad \text{for all}\quad  B\in\mathcal{B}.$$
Moreover, $\theta(B)= g(n)$ if and only if   $D(B)\cong\overrightarrow{g}(p,q)$ with $p,q$ satisfying (\ref{eq4}) for $n\ne 10$  and $$D(B)\cong D~~\text{with}~~ D\in\{\g(3,7),\g(7,3),\g(4,3,5),\g(5,3,4)\}\quad for\quad n=10.$$
\end{lem}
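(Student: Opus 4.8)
The plan is to sandwich $\theta(B)$ between the parameters of the embedded digraph and then squeeze out the equality cases with Lemma~\ref{lemma5}. Suppose $D(B)$ contains a copy $H\cong\g(p,k,q)$, and write $s=p+q$. The copy sits on a principal submatrix of $B$ and dominates the adjacency matrix of $H$ there, so Lemma~\ref{lemma1} together with (\ref{eq3}) gives
\[
\theta(B)\le\theta(H)\le lcm(p,q)+k-2 .
\]
Since $|\V(H)|=s+k-2\le n$, we have $k-2\le n-s$, hence $\theta(B)\le lcm(p,q)+n-s$. The core estimate is $lcm(p,q)\le g(s)$ for $s\ge 3$: when $(p,q)=1$ this is the very definition of $g(s)$, and when $d=\gcd(p,q)\ge 2$ one has $lcm(p,q)=pq/d\le pq/2\le s^{2}/8\le (s^{2}-16)/4\le g(s)$, the middle step being AM--GM and the last using the crudest of the three branches of $g$. (The stray value $s=2$, i.e. $p=q=1$, is handled directly since then $lcm-s=-1$.) Feeding this into Lemma~\ref{lemma5}, which yields $\phi(s)\le\phi(n)$ for every $s\le n$, gives
\[
\theta(B)\le g(s)+n-s=\phi(s)+n\le\phi(n)+n=g(n).
\]

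For the equality statement I would trace back through the chain. If $\theta(B)=g(n)$ then every inequality above is tight, which forces (i) $s+k-2=n$, so $H$ spans $D(B)$; (ii) $lcm(p,q)=g(s)$, and because the estimate for $d\ge 2$ is strict for $s\ge 6$ (and is checked by hand for the few small $s$), this forces $(p,q)=1$ and $pq=g(s)$; and (iii) $\phi(s)=\phi(n)$. By Lemma~\ref{lemma5} the identity in (iii) holds only for $s=n$ or $(n,s)=(10,9)$. In the first case (ii) makes $\{p,q\}$ the coprime pair of (\ref{eq4}) and (i) forces $k=2$, so $H\cong\g(p,q)$; in the second case $s=9$ and $pq=g(9)=20$ force $\{p,q\}=\{4,5\}$, while (i) forces $k=3$, so $H\cong\g(4,3,5)$ or $\g(5,3,4)$. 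When $n=10$ the branch $s=n=10$ also occurs and produces $\{p,q\}=\{3,7\}$, so the two branches together account for exactly the four listed digraphs.

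It then remains to upgrade ``$H$ is a spanning subgraph with $\theta(B)=\theta(H)$'' to ``$D(B)\cong H$'', i.e. to rule out extra arcs. If $D(B)$ is strongly connected it is irreducible, has more than $n$ arcs (so is not $\C_n$), and Lemma~\ref{lemma8} gives $\theta(B)\le n<g(n)$, a contradiction; hence $D(B)$ is not strongly connected. In the glasses case $k=2$ this forbids any arc from the $\C_q$-side to the $\C_p$-side, so $B$ is permutation similar to $\left(\begin{smallmatrix}B_1&X\\0&B_2\end{smallmatrix}\right)$ with $B_1\ge C_p$ and $B_2\ge C_q$. If $B_1\ne C_p$ or $B_2\ne C_q$ the offending diagonal block is irreducible but not a cycle, so Lemma~\ref{lemma8} applied to that principal submatrix already yields $\theta(B)<g(n)$; otherwise Lemma~\ref{lemma4} finishes the job, forcing $X=E_{ij}$ and $D(B)\cong\g(p,q)$.

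The genuinely delicate case, and the one I expect to be the \emph{main obstacle}, is $n=10$, $k=3$. Here $D(B)$ decomposes along the DAG of strong components $C_p<\{v_2\}<C_q$; discarding chords (Lemma~\ref{lemma8} on the $4$- or $5$-vertex block) and any arc that merges components or reverses the order (again Lemma~\ref{lemma8}, now on an enlarged strongly connected block) leaves only the freedom encoded in a block form $\left(\begin{smallmatrix}C_4&u&W\\0&0&v^{T}\\0&0&C_5\end{smallmatrix}\right)$. Writing the top-right block of $B^{m}$ as $\sum_{a}C_4^{\,a}WC_5^{\,m-1-a}+\sum_{a}C_4^{\,a}uv^{T}C_5^{\,m-2-a}$ and invoking Lemma~\ref{lemma3} with $4\cdot 5=20$, any nonzero $W$, or any $u$ or $v$ carrying two nonzero entries, forces this block to fail to be $0$--$1$ for some $m\le 21$, whence $\theta(B)\le 20<21$. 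Thus $W=0$ and $u,v$ are single unit vectors, giving $D(B)\cong\g(4,3,5)$ or $\g(5,3,4)$ and completing the characterization. I anticipate that converting each type of superfluous arc into a premature repeated walk through Lemma~\ref{lemma3} is where the real bookkeeping lies.
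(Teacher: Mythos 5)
Your route is the paper's route: the chain $\theta(B)\le\theta(\g(p,k,q))\le lcm(p,q)+k-2\le g(s)+k-2\le g(s)+n-s\le g(n)$ via Lemma \ref{lemma1} and (\ref{eq3}), then squeezing equality through Lemma \ref{lemma5} to land on either $s=n$, $k=2$ (settled by Lemma \ref{lemma8} and Lemma \ref{lemma4}) or $(n,s,k)=(10,9,3)$ with $\{p,q\}=\{4,5\}$ (settled by Lemma \ref{lemma8} and Lemma \ref{lemma3}). Your explicit justification of $lcm(p,q)\le g(s)$ in the non-coprime case is a detail the paper leaves tacit, and your glasses-case reduction (non-strong-connectivity forbids back arcs, Lemma \ref{lemma8} forces the diagonal blocks to be exactly $C_p$ and $C_q$) matches the paper's Case 1.

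There is, however, one concrete gap, sitting exactly in the case you flagged as delicate. In reducing the $n=10$, $k=3$ configuration to the block form $\bigl(\begin{smallmatrix}C_4&u&W\\0&0&v^{T}\\0&0&C_5\end{smallmatrix}\bigr)$ you claim every superfluous arc is killed by Lemma \ref{lemma8}, applied either to a diagonal block or to an enlarged strongly connected block. This misses a loop at the middle vertex $v_2$. A loop is not a chord of either cycle, does not merge strong components, and does not reverse their order --- $\{v_2\}$ remains a singleton strong component --- and Lemma \ref{lemma8} is unavailable for it since that lemma requires order $n\ge2$ (indeed $\theta([1])=\infty$, so no bound comes from the $1\times1$ block). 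Hence your asserted block form, with middle diagonal entry $0$, is not justified by the tools you cite. The paper eliminates this entry $\alpha$ by a separate direct computation: if $\alpha\ne0$ then $H^5\notin M_{10}\{0,1\}$ --- for instance, if $a$ is the $C_4$-vertex with $a\to v_2$, there are two distinct $5$-walks from $a$ to $v_2$, one circling $C_4$ once before entering $v_2$ and one entering $v_2$ and traversing the loop four times --- giving $\theta(B)\le4$. The repair is a few lines, but as written your case enumeration is provably incomplete. A second, smaller omission: for the ``if'' direction at $n=10$ you assert rather than verify that $\theta(\g(4,3,5))=21$; the paper checks $\sum_{i=0}^{m-2}{C_4}^i(uv^T){C_5}^{m-2-i}\le J_{4\times5}$ for all $m\le 21$ with failure at $m=22$, though the distinctness argument in the proof of Lemma \ref{lemma3}, which you already invoke, does supply exactly this.
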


\begin{proof}
Let $B\in\mathcal{B}$.
 Suppose $D(B)$ contains a copy of $\g(p,k,q)$ with $p+q=m$. Then by Lemma \ref{lemma1} and Lemma \ref{lemma5} we have
 \begin{eqnarray}\label{peq2}
 \theta(B)\le \theta(\g(p,k,q))\le lcm(p,q)+k-2\le g(m)+k-2\le g(m)+n-m\le g(n).
 \end{eqnarray}

Now suppose $\theta(B)=g(n)$.  We distinguish two cases.

{\it Case 1. } $m=n$.  The condition $p+q=m$ implies that $B$ is permutation similar with
 $$H=\left(\begin{array}{cc}
 C_p+B_{11}&X\\Y&C_q+B_{22}
 \end{array}
 \right), $$
  where $X\ne 0$. Note that if two 0-1 matrices  are permutation similar, then they have the same stable index. Applying Lemma \ref{lemma8} we get $Y=0$, $B_{11}= 0$ and $B_{22}=0$, since otherwise we would have $\theta(B)=\theta(H)\le n<g(n)$.
 By Lemma \ref{lemma4} we see that
 $\theta(B)= g(n)$ if and only if $D(B)\cong\overrightarrow{g}(p,q)$ with $p,q$ satisfying (\ref{eq4}).

{\it Case 2. } $m<n$.   By (\ref{peq2}) we get
 $$g(m)-m=g(n)-n.$$
 Applying  Lemma \ref{lemma5}  we have $$n=10,\quad m=9, \quad k=3\quad\text{and}\quad lcm(p,q)=g(9),$$
 which leads to $\{p,q\}=\{4,5\}.$  Moreover, $D(B)$ contains no copy of $\g(4,5)$ or $\g(5,4)$. Otherwise we have
   $$\theta(B)\le g(9)<g(10).$$
   Therefore, $D(B)$ contains a copy of $\g(4,3,5)$ or $\g(5,3,4)$.

 Next we prove that if $D(B)$ has a copy of $\g(4,3,5)$, then $D(B)\cong\g(4,3,5)$. Note that $B$ is permutation similar with
 $$H=\left(\begin{array}{ccc}
 C_4+B_{11}&u&B_{13}\\x^T&\alpha&v^T\\B_{31}&y&C_5+B_{33}
 \end{array}\right),$$
 where both $u$ and $v$ are nonzero column vectors.
Applying Lemma \ref{lemma8} we obtain
$$B_{11}=0,\quad B_{33}=0,\quad  x=0\quad \text{and} \quad y=0,$$
since otherwise we would have $\theta(B)=\theta(H)\le 10$.  Since $D(B)$ has no copy of $\g(4,5)$ or $\g(5,4)$,
 we have $B_{13}=B_{31}^T=0$. Moreover, it is easy to check that if $\alpha\ne 0$, then $H^5\not\in M_{10}\{0,1\}$, which leads to $\theta(B)=\theta(H)\le 4$. Therefore,
 \begin{equation}\label{peq4}
 H=\left(\begin{array}{ccc}
 C_4&u&0\\0&0&v^T\\0&0&C_5
 \end{array}\right).
 \end{equation}

By direct computation, we have
$$H^m=\left(\begin{array}{ccc}
{C_4}^m&{C_4}^{m-1}u&\sum_{i=0}^{m-2}{C_4}^i(uv^T){C_5}^{m-2-i}\\
0&0&v^T{C_5}^{m-1}\\
0&0&{C_5}^{m}
\end{array}\right).$$
If $u$ or $v$ has more than one nonzero entry, then applying Lemma \ref{lemma3} we have
$$F(m)\equiv\sum_{i=0}^{m-2}{C_4}^i(uv^T){C_5}^{m-2-i}\ge 2J_{4\times 5}\quad \text{when}\quad m=21,$$
which implies $H^{21}\not\in M_{10}\{0,1\}$ and $$\theta(B)=\theta(H)\le 20< g(10).$$
Hence, each of $u$ and $v$ has exactly one nonzero entry and $D(B)\cong\g(4,3,5)$.

Similarly,  if $D(B)$ has a copy of $\g(5,3,4)$, then $D(B)\cong\g(5,3,4)$.

On the other hand, if $D(B)=\g(4,3,5)$ or  $\g(5,3,4)$, then $B$ or $B^T$ is permutation similar to a matrix $H$ with form (\ref{peq4}), where
  each of $u$ and $v$ has exactly one nonzero entry. It follows that
$$F(m)\le J_{4\times 5}\quad \text{for all}\quad 1\le m\le 21$$
and $F(22)\not\in M_{10}\{0,1\}$, which means $\theta(B)=\theta(H)=21=g(10).$

This completes the proof.
 \end{proof}

\par
Now we are ready to present the proof of Theorem \ref{th1}.
\begin{proof}[Proof of Theorem \ref{th1}]
Let $A\in M_n\{0,1\}$ with $\theta(A)<\infty$.
 Without loss of generality, we may assume
\begin{eqnarray}\label{neq1}
A=\left(\begin{array}{cccc}
A_1&*&\cdots&*\\
0&A_2&\cdots&*\\
\vdots&\vdots&\ddots&\vdots\\
0&0&\cdots&A_k
\end{array}\right)
\end{eqnarray}
with each $A_i$ being an $n_i\times n_i$ irreducible square matrix for $i=1,2,\ldots,k$. We denote by $A_{ij}$ the $(i,j)$-block of $A$ in (\ref{neq1}).

 If there exists $i\in\{1,2,\ldots,k\}$ such that $n_{i}\geq2$ and $D(A_i)$ is not isomorphic to $C_{n_i}$, then by Lemma \ref{lemma8} we  have \begin{equation}\label{peq1}
\theta(A)\leq n<g(n).
\end{equation}
If there  exist $i,j\in  \{1,2,\ldots, k\}$ such that $ n_{i}\geq2,n_{j}\geq2$ and $A_{ij}\neq0$, then $D(A)$ contains a  copy of $\g(n_{i},n_{j})$. Applying Lemma
\ref{lemma4}  we have
$$\theta(A)\leq g(n_{i}+n_{j})\leq g(n).$$
Moreover, $\theta(A)=g(n)$ if and only if   $D(A)\cong \g(p,q)$ with $\{p,q\}=\{n_{i},n_{j}\}$ satisfying (\ref{eq4}).

Now  we are left to verify the case
$$A_i=C_{n_i}~~\text{ for all }~~n_i\ge 2\quad\text{ and }\quad A_{i_{1}i_{2}}=0 ~~\text{ for all }~~ n_{i_1}\ge 2,n_{i_2}\ge2.$$
We consider the digraph $D(A)$. By the definition of stable index, $D(A)$ has two distinct walks $\w_1$ and $\w_2$ with length $\theta(A)+1$ from $x$ to $y$ for some vertices $x,y$. Denote by $\w_1\cup \w_2$ the union of $\w_1$ and $\w_2$, i.e.,  the digraph with vertex set $\V(\w_1)\cup \V(\w_2)$  and arc set $\A(\w_1)\cup\A(\w_2)$.
We distinguish three cases.

 {\it Case 1.}  $\w_1\cup \w_2$ is acyclic. Then both $\w_1$ and $\w_2$ are directed path and $\theta(A)\leq n-1< g(n)$.

{\it Case 2.}  $\w_1\cup \w_2$ has exactly one cycle $\C_p$. If only one of the two walks, say $\w_1$, contains copies of $\C_p$, then $\w_2$ is a directed path with length less than $n-1$, which implies $\theta(A)\leq n-1$.
If both $\w_1$ and $\w_2$ contain copies of $\C_p$, then by deleting a copy of $\C_p$ in each of $\w_1$ and $\w_2$ we get two distinct walks of length $\theta(A)+1-p$ from $x$ to $y$, which contradicts  the definition of $\theta(A)$.
Hence, we always have $\theta(A)<g(n)$ in this case.

{\it Case 3.} $\w_1\cup \w_2$ has at least two cycles.  We distinguish two subcases.

{\it Subcase 3.1.}
 $\w_1\cup \w_2$ contains a copy of $\g(p,k,q)$ for some positive integers $p,q,k$.  Then $(p,q)=(n_i,n_j)$ for some $i,j\in \{1,2,\dots,k\}$.
  Since $A_{ij}=0$ for all $n_i\ge 2, n_j\ge2$, applying Lemma \ref{lemma9} we have  either
  $$\theta(A)\le \theta(\w_1\cup \w_2)<g(n)$$
  or $g(A)=g(n)$ with $$D(A)\cong \g(p,3,q),\quad\text{where}\quad \{p,q\}=\{4,5\}.$$

 {\it Subcase 3.2.} $\w_1\cup \w_2$  does not contain any copy of $\g(p,k,q)$. Then  $\w_1$ and $\w_2$ contain disjoint  cycles $\C_p$ and $\C_q$, respectively. Moreover, $\w_1\cup \w_2$ has the following diagram.
\begin{center}
 \begin{tikzpicture}[>=stealth]
\draw[->](1.5,0.5)arc[start angle =90,end angle=-270,radius=0.5cm];
\draw(1.5,0)node[scale=1]{$\overrightarrow{C_{p}}$};
\node[shape=circle,fill=black,scale=0.12](a)at (-0.5,-0.75){x};
\draw(-0.7,-0.8)node[scale=1]{$x$};
\draw[->](-0.5,-0.75)arc[start angle =160,end angle=85,radius=1.5cm];
\draw[->](-0.5,-0.75)arc[start angle =-160,end angle=-87,radius=1.5cm];
\draw(3.2,0.1)node[scale=1]{$\overrightarrow{w}_{1}$};
\draw[<-](3.5,-0.75)arc[start angle =20,end angle=95,radius=1.5cm];
\draw[<-](3.5,-0.75)arc[start angle =-20,end angle=-93,radius=1.5cm];
\draw(3.2,-1.7)node[scale=1]{$\overrightarrow{w}_{2}$};
\node[shape=circle,fill=black,scale=0.12](b) at (3.5,-0.75){y};
\draw(3.7,-0.8)node[scale=1]{$y$};
\draw[->](1.5,-2.25)arc [start angle =-90,end angle=-450,radius=0.5cm];
\draw(1.5,-1.75)node[scale=1]{$\overrightarrow{C_{q}}$};
\end{tikzpicture}
\end{center}

 Note that there exists a unique directed path from $x$ to $y$ in each of $\w_1$ and $\w_2$, denoted $\p_1$ and $\p_2$, which can be obtained by deleting copies of $\C_p$ and $\C_q$ in $\w_1$ and $\w_2$, respectively. Suppose the length of $\p_1$ and $\p_2$ are  $r$ and $s$, respectively. Then $\w_1$ is the union of $\p_1$ and $(\theta(A)+1-r)/p$ copies of $\C_p$; $\w_2$ is the union of $\p_2$ and $(\theta(A)+1-s)/q$ copies of $\C_q$.

 Let $u$ and $v$ be the smallest nonnegative integers such that
 $r+up= s+vq$.  Then
 $$u=(\theta(A)+1-r)/p\quad\text{and}\quad v=(\theta(A)+1-s)/q.$$
 Otherwise let $\w_3$ be the union of $\p_1$ and $u$ copies of $\C_p$, and let $\w_4$ be the union of $\p_2$ and $v$ copies of $\C_q$. Then $\w_3$ and $\w_4$ are two distinct walks of length $r+up<\theta(A)+1$ from $x$ to $y$, which contradicts the definition of $\theta(A)$.

Next we prove that
 \begin{equation}\label{eq111}
 r+up\le g(n),
 \end{equation}
 which  implies $$\theta(A)\le r+up-1<g(n).$$

 Note that $r, s\le n-1$.
 If $p=1$, then
 \begin{eqnarray*}
 r+up&\le& \min\{s+mq: m \text{ is a nonnegative integer such that }s+mq\ge n-1\}\\
 &<&n-1+q\le n-1+(n-3)\le g(n).
 \end{eqnarray*}
 Similarly, we have (\ref{eq111}) when $q=1$.
 Suppose $p\ge 2$ and $q\ge 2$. Since the cycle $\C_q$ in $\w_2$ is disjoint with $\w_1$, $\p_1$ has at most $n-2$ vertices and we have $r\le n-3$. Similarly,  $s\le n-3$.
 Let $z=lcm(p,q)$. If $r+up\ge g(n)+1$, then
 \begin{eqnarray*}
 r+up-z\ge g(n)+1-g(p+q)\ge g(n)+1-g(n-2)\ge n-3\ge \max\{r,s\}.
 \end{eqnarray*}
 It follows that
 $$r+(u-z/p)p=r+up-z=s+vq-z=s+(v-z/q)q \quad \text{with}\quad u-z/p\ge0,  v-z/q\ge0,$$ which contradicts the choices of $u$ and $v$.
 Thus we obtain (\ref{eq111}).

 Now we can conclude that $\theta(A)\le g(n)$ with equality if and only if $D(A)\cong D$, where $$D\in \{ \g(p,q): p,q \text{ satisfies } (\ref{eq4})\}\cup\{\g(4,3,5),\g(5,3,4)\}.$$
This completes the proof.
\end{proof}

\section*{Acknowledgement}
This work was supported by Science and Technology Foundation of Shenzhen City (No. JCYJ2019080817421) and a Natural Science Fund of Shenzhen University.
The  authors are grateful to Professor Xingzhi Zhan  for helpful discussions on this topic  and valuable suggestions on this paper.

\end{document}